%
%

\documentclass[11pt]{article}
\usepackage{amsfonts,amssymb,amsmath,amsthm}
\usepackage{enumitem}
\usepackage{color} 
   \definecolor{cites}{rgb}{0.50 , 0.00 , 0.00}  
   \definecolor{urls} {rgb}{0.00 , 0.00 , 0.50}  
   \definecolor{links}{rgb}{0.00 , 0.00 , 0.50}   
\usepackage[
      colorlinks=true,   
      citecolor=cites,   
      urlcolor=urls,     
      linkcolor=links,   
      pdftitle={A note on Hausdorff convergence of pseudospectra},  
      pdfauthor={Marko Lindner, Dennis Schmeckpeper}, 
      pdfpagemode=UseOutlines, 
      pdfstartview=FitH,       
      bookmarksopen=false      
   ]{hyperref}

\parindent3ex
\parskip1ex

\topmargin-10mm
\textheight230mm
\oddsidemargin0mm
\evensidemargin0mm
\textwidth160mm

\newcommand\eps\varepsilon
\newcommand\ph\varphi
\newcommand\spec{{\rm spec}\,}  
\newcommand\specn{{\rm spec}}   
\newcommand\speps{{\rm spec}_\eps}
\newcommand\Speps{{\rm Spec}_\eps}
\newcommand\dH{d_{\rm H}}

\newcommand\dist{{\rm dist}}

\newcommand\closn{{\rm clos}}
\newcommand\clos{\closn\,}
\newcommand\sub{{\rm sub}}

\newcommand\Hto{
   \unitlength0.1ex
   \begin{picture}(30,15)
   \put(13,16){\makebox(0,0)[]{\tiny\rm H}}
   \put(15,5){\makebox(0,0)[]{$\to$}}
   \end{picture}
}
\newcommand\Hot{
   \unitlength0.1ex
   \begin{picture}(30,15)
   \put(17,16){\makebox(0,0)[]{\tiny\rm H}}
   \put(15,5){\makebox(0,0)[]{$\leftarrow$}}
   \end{picture}
}

\newcommand\C{{\mathbb C}}
\newcommand\R{{\mathbb R}}

\newcommand\Z{{\mathbb Z}}
\newcommand\N{{\mathbb N}}

\newcommand\D{{\mathbb D}}


\newtheorem{theorem}{Theorem}[section]
\newtheorem{lemma}[theorem]{Lemma}
\newtheorem{corollary}[theorem]{Corollary}


\newenvironment{example}
 {\par\noindent\refstepcounter{theorem}{\bf Example \thetheorem}\ }
 {\raisebox{1mm}{\framebox{}}\pagebreak[2]}


\newenvironment{remark}
  {\pushQED{\qed}\remarkx\normalfont}
  {\popQED\endremarkx}


  
%

\numberwithin{figure}{section}  

\newcounter{abccounter}

\makeatletter
\let\@fnsymbol\@arabic
\makeatother

\begin{document}
\title{\bf A note on Hausdorff convergence of pseudospectra}
\author{
{\sc Marko Lindner}\footnote{Maths Institute, TU Hamburg, Germany, \href{mailto:lindner@tuhh.de}{\tt lindner@tuhh.de}}
\quad and\quad
{\sc Dennis Schmeckpeper}\footnote{Maths Institute, TU Hamburg, Germany,
\href{mailto:dennis.schmeckpeper@tuhh.de}{\tt dennis.schmeckpeper@tuhh.de}}

}

\maketitle
\begin{quote}
\renewcommand{\baselinestretch}{1.0}
\footnotesize {\sc Abstract.}
For a bounded linear operator on a Banach space, we study approximation of the spectrum and pseudospectra in the Hausdorff distance. We give sufficient and necessary conditions in terms of pointwise convergence of appropriate spectral quantities.
\end{quote}

\noindent
{\it Mathematics subject classification (2020):} 47A10; Secondary 47A25, 47-08.\\
{\it Keywords and phrases:} resolvent, spectrum, pseudospectrum, Hausdorff convergence
\section{Introduction}
{\bf Spectrum, pseudospectrum and lower norm.}
Given a bounded linear operator $A$ on a Banach space $X$, we denote its {\sl spectrum} and {\sl pseudospectra} \cite{TrefEmb}, respectively, by
\[
\spec A\ :=\ \{\lambda\in\C: A-\lambda I \text{ is not invertible}\}
\]
and
\begin{equation} \label{eq:speps}
\textstyle
\speps A\ :=\ \{\lambda\in\C: \|(A-\lambda I)^{-1}\|>\frac 1\eps \},\qquad \eps>0,
\end{equation}
where we identify $\|B^{-1}\|:=\infty>\frac 1\eps$ if $B$ is not invertible, so that $\spec A\subseteq \speps A$ for all $\eps>0$.

A fairly convenient access to the norm of the inverse is given by the so-called {\sl lower norm}, the number
\begin{equation}\label{eq:nu}
\nu(A)\ :=\ \inf_{\|x\|=1}\|Ax\|.
\end{equation}
Indeed, putting $\mu(A):=\min\{\nu(A),\,\nu(A^*)\}$, we have
\begin{equation}\label{eq:invmu}
\|A^{-1}\|\ \ =\ 1/\mu(A),
\end{equation}
where $A^*$ is the adjoint on the dual space $X^*$ and equation \eqref{eq:invmu} takes  the form $\infty=1/0$ if and only if $A$ is not invertible. 

One big advantage of this approach is that, in case $X=\ell^p(\Z^d,Y)$ with $p\in [1,\infty]$, $d\in\N$ and a Banach space $Y$, $\nu(A)$ can be approximated by the same infimum \eqref{eq:nu} with $x\in X$ restricted to elements with finite support of given diameter $D$. We can even quantify the approximation error against $D$, see \cite{{CW.Heng.ML:UpperBounds}} and \cite{LiSei:BigQuest} (as well as \cite{HagLiSei} for a corresponding result on the norm).

By means of \eqref{eq:invmu}, we can rewrite spectrum and pseudospectra as follows:
\[
\spec A\ =\ \{\lambda\in\C: \mu(A-\lambda I)=0\}
\]
and
\[
\speps A\ =\ \{\lambda\in\C: \mu(A-\lambda I)<\eps \},\qquad \eps>0.
\]

In other words, $\spec A$ is the level set  of the function $f:\C\to [0,\infty)$ with
\begin{equation}\label{eq:f}
f(\lambda)\ :=\ \mu(A-\lambda I)
\end{equation}
for the level zero, and $\speps A$ is the sublevel set of $f$ for the level $\eps>0$.

{\bf Sublevel sets.}
For a function $g:\C\to [0,\infty)$ and $\eps>0$, let
\[
\sub_\eps(g) := \{\lambda\in\C : g(\lambda)<\eps\}
\]
denote the {\sl sublevel set of $g$ for the level $\eps$}.

In general, pointwise convergence $g_n\to g$ of functions $\C\to [0,\infty)$ need not coincide with  Hausdorff convergence of their sublevel sets:

\begin{example} \label{ex:1}
Suppose we have $g$ and $g_n$ such that a) $g_n\to g$ as well as b) $\sub_\eps(g_n)\ \Hto\ \sub_\eps(g)$ hold for all $\eps>0$. Increasing $g(\lambda)$ to a certain level $\eps>0$ in a point $\lambda$, where $g$ was continuous and below $\eps$ before, changes the state of a), while it does not affect $\closn(\sub_\eps(g))$ and hence b). 
\end{example}

So let us look at continuous examples from here on.

\begin{example} \label{ex:2}
For $g_n(\lambda):=\frac{|\lambda|}n\to 0=:g(\lambda)$, the Hausdorff distance of
the sublevel sets 
\[
\sub_\eps(g_n)=n\eps\D
\qquad\text{and}\qquad
\sub_\eps(g)=\mathbb C
\]
remains infinite, where $\D$ denotes the open unit disk in $\C$.  
\end{example}

Of course, this problem was due to the unboundedness of $\sub_\eps(g)$.
So let us further focus on functions that go to infinity at infinity, so that all
sublevel sets are bounded.
\begin{example}{\bf (locally constant)\ } \label{ex:3}
Let $g(\lambda):=h(|\lambda|)$
and $g_n(\lambda):=h_n(|\lambda|)$ for $n\in\N$, where
\[
\textstyle
h(x):=\max\{\min\{|x|,1\},|x|-1\},\qquad
h_1(x):= \frac 14 x^2,\qquad
h_n:=h+\frac 1n(h_1-h)\to h
\]
for $x\in\R$ and $n\in\N$. 
Then, unlike any $g_n$, $g$ is locally constant in $2\D\setminus\D$. Consequently,
\[
\sub_1(g_n)\equiv 2\D\ \not\!\!\!\Hto\ \D=\sub_1(g)
\qquad\text{but}\qquad g_n\to g.
\vspace{-18pt}
\]
~\hfill \qedhere
\end{example}
%

\begin{example} {\bf (increasingly oscillating)\ } \label{ex:5}
Let $g_n(\lambda):=h_n(|\lambda|)$ for $n\in\N$, where
\[
h_n(x):=\left\{
\begin{array}{ll}
|\sin(n\pi x)|,&x\in[0,1],\\
x-1,&x>1.
\end{array}
\right.
\] 
Then $h_n(x)<\eps$ for all $\eps>0$ and
\[
\textstyle
x\in \frac 1n\Z\cap [0,1]\Hto[0,1]
\quad\text{as}\quad n\to\infty.
\]
It follows that $\sub_\eps(g_n)\Hto(1+\eps)\D$ for all $\eps>0$, while $g_n$ does not converge pointwise at all.
\end{example}

{\bf The result.}
For a sequence of bounded operators $A_n$ on $X$ and their corresponding functions $f_n:\C\to [0,\infty)$ with
\begin{equation}\label{eq:fn}
f_n(\lambda)\ :=\ \mu(A_n-\lambda I),\qquad n\in\N,
\end{equation}
we show equivalence of pointwise convergence $f_n\to f$ and Hausdorff convergence of their sublevel sets, i.e.~of the corresponding pseudospectra,
\[
f_n\to f
\qquad\iff\qquad
\speps A_n\ \Hto\ \speps A,\quad\forall\eps>0.
\]
This result is not surprising (and similar arguments have been used e.g.~in \cite{Colbrook:PE} in a more specific situation) but there are some little details that deserve to be written down as this separate note.

In \cite{subwords}, the approximation of the lower norm of $H(b) -\lambda I$ for
a (generalized) discrete Schrödinger operator $H(b)$ and $\lambda \in
\mathbb{C}$, is established via successive exhaustion of the set of finite
subwords of the potential $b \in \ell^\infty(\mathbb{Z})$.
Together with our paper here, this yields Hausdorff approximation of the pseudospectrum of $H(b)$, also see \cite{stabindic}.

\section{Lipschitz continuity and non-constancy of $\mu$}
Our functions $\nu$ and $\mu$, and hence $f$ and $f_n$, have two properties that rule out effects as in Examples \ref{ex:1} -- \ref{ex:5}: Lipschitz continuity and the fact that their level sets have no interior points, i.e.~$\mu$ is not constant on any open sets.

The first property is straightforward but the latter is a very nontrivial subject \cite{Globevnik,Shargorodsky08,Shargorodsky09,ShargoSkarin,DaviesShargo}, and it actually limits the choice of our Banach space $X$ as shown in Lemma \ref{lem:nonconst}.

\begin{lemma} \label{lem:Lipschitz}
For all bounded operators $B,C$ on $X$, one has
\[
|\nu(B)-\nu(C)|\ \le\ \|B-C\|,
\]
so that also $\mu(B)=\min\{\nu(B),\nu(B^*)\}$ is Lipschitz continuous with Lipschitz constant $1$.\\
The same follows for the functions $f$ from \eqref{eq:f} and $f_n$ with $n\in\N$ from \eqref{eq:fn}.
\end{lemma}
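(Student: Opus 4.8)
The plan is to prove the basic inequality $|\nu(B)-\nu(C)|\le\|B-C\|$ first, and then bootstrap everything else from it. For the lower norm, fix a unit vector $x\in X$. Then $\|Bx\|\le\|Cx\|+\|(B-C)x\|\le\|Cx\|+\|B-C\|$, and taking the infimum over $\|x\|=1$ on the left gives $\nu(B)\le\nu(C)+\|B-C\|$. Swapping the roles of $B$ and $C$ yields the reverse inequality, hence $|\nu(B)-\nu(C)|\le\|B-C\|$. I do not expect any obstacle here; this is the routine part.

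Next I would pass to $\mu$. Since taking adjoints is an isometry, $\|B^*-C^*\|=\|B-C\|$, so the inequality just proved also gives $|\nu(B^*)-\nu(C^*)|\le\|B-C\|$. Now $\mu(B)=\min\{\nu(B),\nu(B^*)\}$, and the elementary fact that $|\min\{a,b\}-\min\{a',b'\}|\le\max\{|a-a'|,|b-b'|\}$ (a quick case check, or: $\min$ is $1$-Lipschitz in the sup-norm on $\R^2$) yields $|\mu(B)-\mu(C)|\le\|B-C\|$. So $\mu$ is $1$-Lipschitz on the bounded operators.

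Finally I would apply this to $f$ and $f_n$. For fixed $\lambda,\lambda'\in\C$ we have $f(\lambda)-f(\lambda')=\mu(A-\lambda I)-\mu(A-\lambda' I)$, and $\|(A-\lambda I)-(A-\lambda' I)\|=|\lambda-\lambda'|\,\|I\|=|\lambda-\lambda'|$, so $|f(\lambda)-f(\lambda')|\le|\lambda-\lambda'|$; the identical computation with $A_n$ in place of $A$ handles each $f_n$. This shows $f$ and all $f_n$ are $1$-Lipschitz on $\C$, completing the proof. One small point worth stating explicitly is that all the operators involved ($B-C$, $A-\lambda I$, etc.) are bounded, so the norms and adjoints are well defined and $\|I\|=1$; beyond that, there is nothing delicate, and the only genuine content is the three-line estimate on $\nu$ together with the $1$-Lipschitz property of $\min$.
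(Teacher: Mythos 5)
Your proof is correct and follows essentially the same route as the paper: the core is the triangle-inequality estimate $\|Bx\|\le\|Cx\|+\|B-C\|$ for unit vectors $x$ followed by an infimum and a swap of $B$ and $C$, which is exactly the paper's argument written in the reverse direction. The additional details you supply (isometry of the adjoint, $1$-Lipschitzness of $\min$, and the computation $\|(A-\lambda I)-(A-\lambda' I)\|=|\lambda-\lambda'|$) are all correct and merely spell out steps the paper leaves implicit.
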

This result is absolutely standard but we give the (short) proof, for the reader's convenience:
\begin{proof}
For all $x\in X$ with $\|x\|=1$, one has
\[
\|B-C\|\ \ge\ \|Bx-Cx\|\ \ge\ \|Bx\|-\|Cx\|\ \ge\ \nu(B)-\|Cx\|.
\]
Now pass to the infimum in $\|Cx\|$ to get $\|B-C\|\ge\nu(B)-\nu(C)$. Finally swap $B$ and $C$.
\end{proof}

It will become crucial to understand when the resolvent norm of a bounded
operator cannot be constant on an open subset. This is a surprisingly rich and
deep problem. As it turns out it is connected to a geometric property,
the complex uniform convexity, of the underlying Banach space (see \cite[Definition
2.4 (ii)]{Shargorodsky08}).
\pagebreak 

\begin{lemma}[{Globevnik \cite{Globevnik}, Shargorodsky et al. \cite{Shargorodsky08,Shargorodsky09,ShargoSkarin,DaviesShargo}}] \label{lem:nonconst}
~\\Let $X$ be a Banach space which satisfies at least one of the following properties,\\[-8mm]
\begin{enumerate}[label=(\alph*)] \itemsep-1mm
\item $\dim(X) < \infty$,
\item $X$ is complex uniform convex, 
\item its dual, $X^*$, is complex uniform convex.
\end{enumerate}
~\\[-8mm]
For example, every Hilbert space is of this kind, and every space $X=\ell^p(\Z^d,Y)$ with $p\in [1,\infty]$ and $d\in\N$ falls into this category as soon as $Y$ does \cite{Day}. Then, for every bounded operator $A$ on $X$, the resolvent norm, 
\[
\lambda\ \mapsto\  \|(A-\lambda I)^{-1}\|=1/\mu(A - \lambda I),
\]
cannot be locally constant on any open set in $\C$, and, consequently,
\[
\forall \varepsilon > 0: \quad \closn(\speps A) = \{ \lambda \in
\mathbb{C} : \mu (A -\lambda I) \leq \varepsilon \}.
\]
\end{lemma}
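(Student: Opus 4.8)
The plan is to prove the two assertions in sequence, since the second follows easily from the first together with Lemma~\ref{lem:Lipschitz}. For the first assertion --- that the resolvent norm is nowhere locally constant --- I would not reprove it from scratch but rather cite the relevant results of Globevnik and Shargorodsky et al. The key point to make explicit is the logical chain: if $\dim X<\infty$ this is classical (the resolvent is a rational operator-valued function, so $\|(A-\lambda I)^{-1}\|$ is real-analytic off the spectrum and cannot be constant on an open set unless it is globally constant, which it is not since it blows up near $\spec A$ and tends to $0$ at infinity); and if $X$ or $X^*$ is complex uniformly convex, this is precisely the content of \cite{Shargorodsky08} (see also \cite{Globevnik,Shargorodsky09,ShargoSkarin,DaviesShargo}). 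The remark that Hilbert spaces and $\ell^p(\Z^d,Y)$ spaces fall into these classes is a consequence of \cite{Day}, which I would state without proof.

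Granting the non-constancy, I would derive the description of $\closn(\speps A)$ as follows. Write $r(\lambda):=\|(A-\lambda I)^{-1}\|=1/\mu(A-\lambda I)$, a function $\C\to(0,\infty]$ that is continuous (by Lemma~\ref{lem:Lipschitz}, $\mu$ is Lipschitz, hence $r$ is continuous as a map into the extended reals, with $r=\infty$ exactly on $\spec A$). Then $\speps A=\{\lambda:r(\lambda)>1/\eps\}$ and the claimed identity reads $\closn\{r>1/\eps\}=\{r\ge 1/\eps\}$. The inclusion ``$\subseteq$'' is immediate from continuity of $r$: the superlevel set $\{r\ge 1/\eps\}$ is closed and contains $\{r>1/\eps\}$. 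For ``$\supseteq$'', take $\lambda_0$ with $r(\lambda_0)=1/\eps$ (the case $r(\lambda_0)>1/\eps$ being trivial) and suppose for contradiction that $\lambda_0\notin\closn\{r>1/\eps\}$; then there is an open ball $U$ around $\lambda_0$ on which $r\le 1/\eps$, and combined with $r(\lambda_0)=1/\eps$ and continuity this forces $\lambda_0$ to be a local maximum of $r$. By the non-constancy result, $r$ is not locally constant near $\lambda_0$, so a local maximum at an interior point is genuinely excluded --- here I would invoke the sharper form of the Shargorodsky-type theorem, namely that $\lambda\mapsto\|(A-\lambda I)^{-1}\|$ satisfies a maximum principle: it can have no local maximum on the open resolvent set (this is really the substance of \cite{Shargorodsky08}, and what rules out a ``plateau'' as well as an isolated peak). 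That contradiction gives $\lambda_0\in\closn\{r>1/\eps\}$, completing the proof.

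The main obstacle, and the one place where genuine care is needed, is exactly the passage from ``not locally constant'' to ``no local maximum''. Mere non-constancy on open sets does not by itself forbid an interior local maximum of a continuous function; what one actually needs is the maximum-modulus-type principle for $\lambda\mapsto\|(A-\lambda I)^{-1}\|$, which is the true theorem of Globevnik and Shargorodsky et al. in the complex-uniformly-convex setting. So in writing the proof I would be careful to cite the maximum-principle form of their result (or the subharmonicity of $\log\|(A-\lambda I)^{-1}\|$, from which the maximum principle follows) rather than only the weaker non-constancy statement, and to note that in finite dimensions the same conclusion is elementary via real-analyticity. Everything else --- continuity of $r$, closedness of superlevel sets, the trivial inclusion --- is routine and I would dispatch it in a line or two.
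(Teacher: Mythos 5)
The paper states this lemma as imported from the literature and gives no proof of its own, so your plan --- cite Globevnik and Shargorodsky et al.\ for the non-constancy of the resolvent norm and then derive the closure identity --- is consistent with what the paper does, and your derivation of $\closn(\speps A)=\{\lambda:\mu(A-\lambda I)\le\eps\}$ is correct. Your key observation is also the right one: bare non-constancy of a continuous function does not exclude an interior local maximum, so the step from ``$r\le 1/\eps$ on a ball with $r(\lambda_0)=1/\eps$'' to a contradiction needs a maximum principle. The cleanest bookkeeping is that $\lambda\mapsto\|(A-\lambda I)^{-1}\|$ is subharmonic on the resolvent set in \emph{every} Banach space (norms of analytic vector-valued functions are subharmonic), so a local maximum already forces local constancy on a neighbourhood; it is only the \emph{non-constancy} that requires hypotheses (a)--(c), and attributing the maximum principle itself to \cite{Shargorodsky08} slightly misplaces where those hypotheses enter. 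One genuine error in your sketch is the parenthetical treatment of the finite-dimensional case: the resolvent norm is \emph{not} real-analytic off the spectrum in general. For $A=\diag(1,-1)$ on $\C^2$ one has $\|(A-\lambda I)^{-1}\|=\max\{|1-\lambda|^{-1},|1+\lambda|^{-1}\}$, which has a kink along the line $\mathrm{Re}\,\lambda=0$, so the real-analyticity argument does not work as stated; the finite-dimensional case should instead go through the same subharmonicity-plus-non-constancy route (or simply be absorbed into the citations, which cover it).
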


\section{Set sequences and Hausdorff convergence}
Let $(S_n)$ be a sequence of bounded sets in $\mathbb C$
and recall the following notations (e.g.~\cite[\S 3.1.2]{HaRoSi2}):
\begin{itemize} \itemsep-1mm
\item $\liminf S_n=$ the set of all limits of sequences $(s_n)$ with $s_n\in S_n$;
\item $\limsup S_n=$ the set of all partial limits of sequences $(s_n)$ with $s_n\in S_n$;
\item both sets are always closed;
\item let us write $S_n\to S$ if $\liminf S_n=\limsup S_n=S\,(=\clos\, S)$;
\item then $S_n\to S\iff \clos\,S_n\to S$ (and again, $S=\clos\, S$ is automatic).
\end{itemize}

Here is an apparently different approach to set convergence:
For $z\in\C$ and $S\subseteq\C$, set $\dist(z,S):=\inf_{s\in S}|z-s|$.
The {\sl Hausdorff distance} of two bounded sets $S,T\subseteq\C$, defined via
\[
\dH(S,T)\ :=\ \max\left\{\sup_{s\in S}\dist(s,T),\ \sup_{t\in T}\dist(t,S)\right\},
\]
\begin{itemize} \itemsep-1mm
\item ...is a metric on the set of all compact subsets of $\C$;
\item ...is just a pseudometric on the set of all bounded subsets of $\C$:\\
besides symmetry and triangle inequality, one has $\dH(S,T)=0\iff \clos S=\clos T$ since
\[
\dH(S,T)\ =\ \dH(\clos S, T)\ =\ \dH(S,\clos T)\ =\ \dH(\clos S,\clos T);
\]
\item let us still write $S_n\Hto S$ if $\dH(S_n,S)\to 0$, also for merely bounded sets $S_n,S$;
\item the price is that the limit $S$ in $S_n\Hto S$ is not unique:\\
one has $S_n\Hto S$ and $S_n\Hto T$ if and only if $\dH(S,T)=0$, 
i.e.~$\clos S=\clos T$.
\end{itemize}

Both notions of set convergence are connected, via the Hausdorff theorem:
\begin{equation}\label{eq:Haus}
S_n\Hto S\qquad\iff\qquad S_n\to \clos S\,.
\end{equation}
Remember:\\[-8mm]
\begin{itemize} \itemsep-1mm
\item the limit of ``$\to$'' is always closed;
\item the limit of ``$\Hto$'' need not be closed...
\item ... but its uniqueness only comes by passing to the closure;
\item passing to the closure in front of ``$\to$'' or ``$\Hto$'' does not change the statement.
\end{itemize}
\pagebreak 

\begin{lemma} \label{lem:X}
Let $S_n$ and $T_n$ be bounded subsets of $\C$ with
$S_n \to S$ and $T_n \to T$. \\
In addition, suppose $S_n \setminus T_n\ne\varnothing$. Then:
\begin{enumerate}[label=(\alph*)]\itemsep-1mm
\item In general, it does \underline{not} follow that
\[
S_n\setminus T_n \quad \to\quad S\setminus T.
\]
\item However, it always holds that
\[
\liminf (S_n\setminus T_n) \quad\supseteq\quad S\setminus T.
\]
\end{enumerate}
\end{lemma}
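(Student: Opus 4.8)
The plan is to handle part (a) by a simple counterexample and part (b) by a direct diagonal-sequence argument.

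For part (a), the plan is to exhibit sequences where a point that is ``deleted'' in every $S_n \setminus T_n$ nonetheless survives in the limit $S \setminus T$, or conversely where a limit point of $S_n \setminus T_n$ lies in $T$. Take, for instance, $S_n = \D$ for all $n$ (so $S = \clos\D$), and let $T_n$ be a shrinking neighbourhood of the origin, say $T_n = \frac 1n\D$, so that $T_n \to \{0\}$. Then $S_n \setminus T_n = \D \setminus \frac 1n\D$, whose limit (in the sense of ``$\to$'') is $\clos\D$, since points arbitrarily close to $0$ are attained for large $n$. But $S \setminus T = \clos\D \setminus \{0\}$, which is not closed and in particular differs from $\clos\D$. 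So $S_n \setminus T_n \not\to S\setminus T$. (One should double-check $S_n\setminus T_n\ne\varnothing$, which is clear here.) This already shows the inclusion in (b) can be strict and cannot be reversed.

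For part (b), let $\lambda \in S \setminus T$. Since $\lambda \in S = \liminf S_n$, there is a sequence $s_n \in S_n$ with $s_n \to \lambda$. The claim is that $s_n \notin T_n$ for all sufficiently large $n$, whence $s_n \in S_n\setminus T_n$ eventually and so $\lambda \in \liminf(S_n\setminus T_n)$ (filling the first finitely many undefined terms arbitrarily, or noting $S_n\setminus T_n\ne\varnothing$). Suppose, for contradiction, that $s_n \in T_n$ for infinitely many $n$; along that subsequence $s_n \to \lambda$ with $s_n \in T_n$, so $\lambda \in \limsup T_n = T$ (using $T_n \to T$), contradicting $\lambda \notin T$. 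Hence $\lambda \in \liminf(S_n \setminus T_n)$, proving the inclusion.

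The only mild subtlety — and the step I expect to need the most care — is the bookkeeping around the hypothesis $S_n\setminus T_n\ne\varnothing$ and the definition of $\liminf$: one must ensure that the sequence witnessing $\lambda\in\liminf(S_n\setminus T_n)$ is genuinely a sequence with $n$-th term in $S_n\setminus T_n$ for every $n$, not merely eventually. This is harmless because $\liminf$ is unaffected by the values at finitely many indices, so one replaces the first few (undefined) terms $s_n$ by arbitrary elements of the nonempty sets $S_n\setminus T_n$; the tail still converges to $\lambda$. No complex-convexity or Lipschitz input is needed here — this lemma is purely set-theoretic and will later be applied with $S_n = \clos(\speps A_n)$ and $T_n$ a slightly smaller sublevel set.
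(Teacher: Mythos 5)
Your proof is correct. Part (b) is essentially the paper's own argument, word for word in structure: pick $s_n\in S_n$ converging to $\lambda$, rule out $s_n\in T_n$ for infinitely many $n$ via $\limsup T_n=T$, and repair the finitely many exceptional indices using $S_n\setminus T_n\ne\varnothing$ — exactly the bookkeeping point you flag is the one the paper also spells out. The only real difference is in part (a). Your counterexample ($S_n=\D$, $T_n=\frac1n\D$) is valid, but it defeats the statement only on a technicality: $S\setminus T=\clos\D\setminus\{0\}$ fails to be the limit merely because it is not closed, and in fact $S_n\setminus T_n\to\clos(S\setminus T)$ does hold in your example. The paper's counterexample ($S_n=[0,1]$, $T_n=\frac1n\Z\cap[0,1]$, both tending to $[0,1]$) is stronger: there $S\setminus T=\varnothing$ is closed, yet $S_n\setminus T_n\to[0,1]$, so even the closure-corrected statement $S_n\setminus T_n\to\clos(S\setminus T)$ fails. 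Since the context of the lemma is an application where one compares $\liminf(S_n\setminus T_n)$ with $S\setminus T$ up to closure anyway, the paper's example better illustrates the phenomenon part (a) is warning about; you may want to note that your example can be upgraded, e.g.\ by making $T_n$ asymptotically dense in $S$.
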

\begin{proof}
\begin{enumerate}[label=(\alph*)]\itemsep-1mm
\item Consider
$S_n := [0,1]\ \to\ [0,1] =:S$ and $T_n:=\frac{1}{n}\mathbb{Z}\cap [0,1]\ \to\ [0,1] =:T$.\\ 
Then $S_n\setminus T_n \ \to\ [0,1] \ne \varnothing = S\setminus T$.
\item Let $x \in S\setminus T$.
Since $x\in S$, there is a sequence $(x_n)$ with $x_n\in S_n$ such that $x_n \to x$.\\
We show that $x_n\not\in T_n$, eventually.\\
Suppose $x_n \in T_n$ for infinitely many $n\in\N$. 
Then there is a strictly monotonic sequence $(n_k)$ in $\N$ with $x_{n_k}\in T_{n_k}$. 
But then
\[
x=\lim_n x_n =\lim_k x_{n_k}\in \limsup_n T_n = T,
\]
which contradicts $x\in S\setminus T$.

Consequently, just finitely many elements of the sequence $(x_n)$ can be in $T_n$. 
Replacing these by elements from $S_n\setminus T_n$ does not change the limit, $x$.
So $x\in \liminf (S_n\setminus T_n)$. \qedhere
\end{enumerate}
\end{proof}

\section{Equivalence of pointwise convergence $f_n\to f$ and Hausdorff convergence of the pseudospectra}
Here is our main theorem. Note that we do not require any convergence of $A_n$ to $A$.
\begin{theorem} \label{thm:main}
Let $X$ be a Banach space with the properties from Lemma \ref{lem:nonconst}
and let $A$ and $A_n,\ n\in\N$, be bounded linear operators on $X$. 
Then the following are equivalent for the functions and sets introduced in 
\eqref{eq:speps}, \eqref{eq:f} and \eqref{eq:fn}:
\[
\begin{array}{rlp{50mm}}
(i) & f_n\to f\text{ pointwise},&\\
(ii) & \text{for all }\eps>0,\text{ one has }\speps A_n\ \Hto\ \speps A.&
\end{array}
\]
\end{theorem}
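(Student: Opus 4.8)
The plan is to prove the two implications separately, using the two structural facts from Section 2 --- Lipschitz continuity of $f$ and $f_n$ (Lemma~\ref{lem:Lipschitz}) and the absence of interior points in the level sets (Lemma~\ref{lem:nonconst}, which gives $\closn(\speps A) = \{\lambda : \mu(A-\lambda I)\le\eps\} = \{\lambda : f(\lambda)\le\eps\}$, and likewise for each $A_n$).

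For $(i)\Rightarrow(ii)$: fix $\eps>0$. By the Hausdorff theorem \eqref{eq:Haus} it suffices to show $\speps A_n \to \closn(\speps A)$, i.e.\ $\liminf \speps A_n = \limsup \speps A_n = \closn(\speps A)$. For the $\limsup \subseteq \closn(\speps A)$ direction, take $\lambda_{n_k}\in\speps A_{n_k}$ with $\lambda_{n_k}\to\lambda$; then $f_{n_k}(\lambda_{n_k}) < \eps$, and I would estimate $f(\lambda) \le |f(\lambda)-f_{n_k}(\lambda)| + |f_{n_k}(\lambda)-f_{n_k}(\lambda_{n_k})| + f_{n_k}(\lambda_{n_k})$, where the first term $\to 0$ by $(i)$, the second $\to 0$ by the uniform Lipschitz bound $\le |\lambda-\lambda_{n_k}|$, and the third is $<\eps$; hence $f(\lambda)\le\eps$, i.e.\ $\lambda\in\closn(\speps A)$. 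For the $\liminf \supseteq \closn(\speps A)$ direction, it is cleaner to show $\closn(\speps A) \subseteq \liminf \speps A_n$; take $\lambda$ with $f(\lambda)\le\eps$. Here the non-constancy lemma is essential: since $f$ is not locally constant near $\lambda$, in every neighbourhood of $\lambda$ there are points $\mu$ with $f(\mu) < \eps$; pick $\mu_j\to\lambda$ with $f(\mu_j)<\eps$. For each fixed $\mu_j$, $f_n(\mu_j)\to f(\mu_j)<\eps$ by $(i)$, so $\mu_j\in\speps A_n$ eventually; a diagonal argument then produces a sequence $\lambda_n\in\speps A_n$ with $\lambda_n\to\lambda$, so $\lambda\in\liminf\speps A_n$. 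One still needs boundedness/uniformity to talk about $\limsup$ and $\liminf$ as bounded sets --- I would note that $\speps A_n \subseteq \eps\D + \specn A_n$ is automatically bounded, and that $(i)$ forces $\|A_n\|$-type quantities to stay controlled near any fixed $\lambda$; more precisely, since $f(\lambda)=\mu(A-\lambda I)$ and $f_n\to f$ pointwise, for $|\lambda|$ large enough all $f_n(\lambda)$ exceed $\eps$, giving a uniform bound on $\bigcup_n \speps A_n$.

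For $(ii)\Rightarrow(i)$: fix $\lambda_0\in\C$ and suppose, for contradiction, that $f_n(\lambda_0) \not\to f(\lambda_0)$. Pass to a subsequence along which $f_{n_k}(\lambda_0) \to c \ne f(\lambda_0)$ (using that $0 \le f_n(\lambda_0) \le \|A_n - \lambda_0 I\|$; if the $f_n(\lambda_0)$ are unbounded one handles that case separately, or reduces to $c=\infty$). Consider two cases. If $c < f(\lambda_0)$: choose $\eps$ with $c < \eps < f(\lambda_0)$. Then $\lambda_0\in\speps A_{n_k}$ for large $k$, so $\dist(\lambda_0, \speps A_{n_k})=0$, and by $(ii)$ $\dist(\lambda_0,\speps A)=0$, hence $\lambda_0\in\closn(\speps A) = \{f\le\eps\}$, contradicting $f(\lambda_0)>\eps$. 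If $c > f(\lambda_0)$ (including $c=\infty$): choose $\eps$ with $f(\lambda_0) < \eps < c$. Then $f(\lambda_0)<\eps$ means $\lambda_0\in\speps A$, so by $(ii)$ $\dist(\lambda_0,\speps A_{n_k})\to 0$; pick $\mu_k\in\speps A_{n_k}$ with $\mu_k\to\lambda_0$, so $f_{n_k}(\mu_k)<\eps$. By the uniform Lipschitz bound, $f_{n_k}(\lambda_0) \le f_{n_k}(\mu_k) + |\lambda_0-\mu_k| < \eps + |\lambda_0-\mu_k|$, whose $\limsup$ is $\le\eps < c$, contradicting $f_{n_k}(\lambda_0)\to c$. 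This exhausts both cases, so $f_n(\lambda_0)\to f(\lambda_0)$.

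\textbf{Main obstacle.} The routine-looking part --- the Lipschitz juggling --- is genuinely easy; the real content, and the step I expect to be most delicate, is the $\liminf \supseteq \closn(\speps A)$ inclusion in $(i)\Rightarrow(ii)$, because it is exactly where local non-constancy of $\mu$ (Lemma~\ref{lem:nonconst}) has to be invoked to pass from ``$f(\lambda)\le\eps$'' to ``$\lambda$ is approximable by points where $f<\eps$, each of which is in turn approximable by points of $\speps A_n$,'' and then to combine these two approximations by a diagonal argument without losing control. A secondary technical point is justifying that all the sets in play are uniformly bounded so that the $\liminf/\limsup$ machinery and the Hausdorff theorem \eqref{eq:Haus} apply; this follows from pointwise convergence of $f_n$ at points of large modulus, but it should be stated explicitly.
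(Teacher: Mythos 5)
Your overall architecture for $(i)\Rightarrow(ii)$ coincides with the paper's: you sandwich $\liminf \speps A_n$ and $\limsup \speps A_n$ between $\speps A$ and $\closn(\speps A)$ using the uniform Lipschitz bound and Lemma~\ref{lem:nonconst}, then invoke \eqref{eq:Haus}; the only cosmetic difference is that you prove $\closn(\speps A)\subseteq\liminf\speps A_n$ by an explicit diagonal argument, where the paper proves $\speps A\subseteq\liminf\speps A_n$ and then takes closures, using that the $\liminf$ is automatically closed. Your $(ii)\Rightarrow(i)$ is genuinely different and arguably cleaner: the subsequence/contradiction argument with the two cases $c<f(\lambda_0)$ and $c>f(\lambda_0)$ uses only the definition of $\dH$ and the Lipschitz bound, and entirely avoids the paper's Lemma~\ref{lem:X} on set differences $S_n\setminus T_n$ (which is the paper's route via $f^{-1}((\eps-\delta,\eps+\delta])$). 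Both case analyses are correct, and you correctly observe that only the trivial inclusion $\closn(\speps A)\subseteq\{f\le\eps\}$ is needed there.

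The one step that fails is precisely the ``secondary technical point'' you flag at the end: the claim that pointwise convergence $f_n\to f$ forces $\bigcup_n\speps A_n$ to be uniformly bounded. It does not. Take $X=\ell^2(\N)$, $A=0$ and $A_n=n\,\langle\cdot,e_1\rangle e_1$. Then $f(\lambda)=|\lambda|$ and $f_n(\lambda)=\min\{|\lambda|,\,|n-\lambda|\}$, so $f_n(\lambda)=f(\lambda)$ as soon as $n\ge 2|\lambda|$, i.e.\ $(i)$ holds; yet $f_n(n)=0$, so $n\in\speps A_n$ for every $\eps>0$ and $\dH(\speps A_n,\speps A)\ge n-\eps\to\infty$. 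Your argument ``for $|\lambda|$ large enough all $f_n(\lambda)$ exceed $\eps$'' controls each fixed $\lambda$ only eventually in $n$; it does not exclude components of $\speps A_n$ escaping to infinity. To be fair, this is not a defect of your write-up alone: the same example shows that \eqref{eq:Haus} as stated, and with it the implication $(i)\Rightarrow(ii)$, requires the sets $\speps A_n$ to be uniformly bounded (as happens when $\sup_n\|A_n\|<\infty$, cf.\ part b) of the paper's first Remark) --- a hypothesis the theorem does not state and which the paper's own proof silently presupposes when it applies \eqref{eq:Haus}. So you were right to isolate this as the delicate point, but the patch you propose does not close it; under the additional boundedness hypothesis the rest of your proof is complete and correct.
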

\begin{proof}
$(i)\implies (ii)$: 
Assume $(i)$ and take $\eps>0$.
For $f(\lambda)<\eps$, $(i)$ implies $f_n(\lambda)<\eps\ \forall n\ge n_0$. 
So it follows
\[
\speps A\ \subseteq\ \liminf\speps A_n\ \subseteq\ \limsup\speps A_n.
\]
Now let $\lambda\in\limsup\speps A_n$, i.e.~$\lambda=\lim \lambda_{n_k}$ 
with $\lambda_{n_k}\in\speps A_{n_k}$, so that $f_{n_k}(\lambda_{n_k})<\eps$. 
Then
\[
|f(\lambda)-f_{n_k}(\lambda_{n_k})|\ \le\ 
\underbrace{|f(\lambda)-f_{n_k}(\lambda)|}_{\to 0\text{ by }(i)}
\ +\ \underbrace{|f_{n_k}(\lambda)-f_{n_k}(\lambda_{n_k})|}_{\le|\lambda-\lambda_{n_k}|\to 0}\ \to\ 0.
\]
Consequently, $f(\lambda)\le\eps$ and hence $\lambda\in \closn(\speps A)$, by Lemma \ref{lem:nonconst}. We get
\[
\speps A\ \subseteq\ \liminf\speps A_n\ \subseteq\ \limsup\speps A_n\ \subseteq\ \closn(\speps A).
\]
Passing to the closure everywhere in this chain of inclusions, just changes $\speps A$ at the very left into $\closn(\speps A)$, and we have $\speps A_n\to\closn(\speps A)$ and hence, by \eqref{eq:Haus}, $(ii)$.
\pagebreak 

$(ii)\implies (i):$  Take $\lambda\in\C$ and put $\eps:=f(\lambda)$.\\[-7mm]
\begin{itemize}
\item Case 1: $\eps=0$.\\
Take an arbitrary $\delta>0$. 
Then, by $(ii)$, $\lambda\in \specn_\delta A\Hot \specn_\delta A_n$. 
So, by \eqref{eq:Haus}, there is a sequence $(\lambda_n)_{n\in\N}$ 
with $\lambda_n\in \specn_\delta A_n=f_n^{-1}([0, \delta))$ and $\lambda_n \to \lambda$. 
    
\item Case 2: $\eps>0$.\\
Take an arbitrary $\delta\in (0,\eps)$. By $(ii)$, we have
\[
S_n\ :=\ \specn_{\eps+\delta}A_n\ \Hto\  \specn_{\eps+\delta}A\ =:\ S,
\qquad i.e.~S_n\to\clos S, \text{ by } \eqref{eq:Haus},
\]
and
\[
T_n\ :=\ \specn_{\eps-\delta}A_n\ \Hto\  \specn_{\eps-\delta}A\ =:\ T,
\qquad i.e.~T_n\to\clos T, \text{ by } \eqref{eq:Haus}.
\]
By Lemma \ref{lem:X} (b), 
\[
\lambda\quad\in\quad 
\closn(\specn_{\eps+\delta} A)\ \setminus\ \closn(\specn_{\eps-\delta}A)
\quad \subseteq\quad 
\liminf \Big(\specn_{\eps+\delta} A_n\ \setminus\ \specn_{\eps-\delta}A_n\Big),
\]  
in short:
\[
\lambda\quad\in\quad
f^{-1}\big((\eps-\delta,\eps+\delta]\big)\quad \subseteq\quad 
\liminf f_n^{-1}\big([\eps-\delta,\eps+\delta)\big).
\]
So there is a sequence $(\lambda_n)_{n\in\N}$ with 
$\lambda_n\in f_n^{-1}\big([\eps - \delta,\eps +\delta)\big)$ and $\lambda_n \to \lambda$. 
 \end{itemize} 
~\\[-6mm]  
In both cases, we conclude
\[
|f(\lambda)-f_n(\lambda)|
\ \le\ \underbrace{|\overbrace{f(\lambda)}^\eps - f_n(\lambda_n)|}_{\le\delta}
\ +\ \underbrace{|f_n(\lambda_n)-f_n(\lambda)|}_{\le |\lambda_n-\lambda|\to 0}\ <\ 2\delta
\]
for all sufficiently large $n$, and hence $f_n(\lambda)\to f(\lambda)$ as $n\to\infty$, i.e.~$(i)$.
\end{proof}

\begin{corollary}
Let the assumptions of Theorem \ref{thm:main} be satisfied.\\
If $X$ is a Hilbert space and the operators $A$ and $A_n$, $n\in\N$, are normal then $(i)$ and $(ii)$ are also equivalent to
\[
 \spec A_n\ \Hto\ \spec A.
\]
\end{corollary}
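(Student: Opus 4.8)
The idea is to reduce everything to Theorem~\ref{thm:main} by making the pseudospectra of the normal operators explicit. For a normal operator $N$ on a Hilbert space the spectral theorem gives $\|(N-\lambda I)^{-1}\|=1/\dist(\lambda,\spec N)$ for every $\lambda\in\C$, hence $\mu(N-\lambda I)=\dist(\lambda,\spec N)$ and $\speps N=\{\lambda\in\C:\dist(\lambda,\spec N)<\eps\}$, the open $\eps$-neighbourhood of $\spec N$. Since $A-\lambda I$ and $A_n-\lambda I$ are again normal, the functions in \eqref{eq:f} and \eqref{eq:fn} are the distance functions $f(\lambda)=\dist(\lambda,\spec A)$ and $f_n(\lambda)=\dist(\lambda,\spec A_n)$. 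As $(i)\iff(ii)$ already holds by Theorem~\ref{thm:main}, it suffices to show that $(i)$, i.e.\ pointwise convergence $\dist(\cdot,\spec A_n)\to\dist(\cdot,\spec A)$, is equivalent to $\spec A_n\Hto\spec A$ for the (compact, nonempty) spectra.

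For ``$\spec A_n\Hto\spec A\implies(i)$'' I would combine $\spec A_n\Hto\spec A\iff\dH(\spec A_n,\spec A)\to 0$ (the Hausdorff theorem \eqref{eq:Haus}) with the elementary estimate $|\dist(\lambda,S)-\dist(\lambda,T)|\le\dH(S,T)$, valid for all bounded $S,T\subseteq\C$; this yields $|f_n(\lambda)-f(\lambda)|\le\dH(\spec A_n,\spec A)\to 0$ for every $\lambda$, hence $(i)$ (even uniformly).

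For ``$(i)\implies\spec A_n\Hto\spec A$'' I would prove $\liminf\spec A_n=\limsup\spec A_n=\spec A$ and invoke \eqref{eq:Haus}. If $\lambda\in\spec A$ then $f(\lambda)=0$, so $f_n(\lambda)\to 0$; picking $\lambda_n\in\spec A_n$ with $|\lambda-\lambda_n|=\dist(\lambda,\spec A_n)=f_n(\lambda)$ (possible since $\spec A_n$ is compact) gives $\lambda_n\to\lambda$, so $\lambda\in\liminf\spec A_n$. Conversely, if $\lambda=\lim_k\lambda_{n_k}$ with $\lambda_{n_k}\in\spec A_{n_k}$, then $f_{n_k}(\lambda)=\dist(\lambda,\spec A_{n_k})\le|\lambda-\lambda_{n_k}|\to 0$, while $f_{n_k}(\lambda)\to f(\lambda)$ by $(i)$, so $f(\lambda)=0$ and $\lambda\in\spec A$; hence $\limsup\spec A_n\subseteq\spec A$. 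Together with Theorem~\ref{thm:main} this gives the asserted three-way equivalence.

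There is no genuine obstacle here. The only point to keep in mind is that the spectra $\spec A_n$ need not be uniformly bounded, which does no harm since $\liminf$, $\limsup$ and $\Hto$ are applied to each individual compact set $\spec A_n$; and normality makes the closure identity behind Lemma~\ref{lem:nonconst} explicit, so nothing beyond the tools already used for Theorem~\ref{thm:main} is required.
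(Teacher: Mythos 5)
Your proof is correct. Both you and the paper rest on the same key fact from the spectral theorem for normal operators, namely $\|(N-\lambda I)^{-1}\|=1/\dist(\lambda,\spec N)$, but you package it differently: the paper states it at the level of sets ($\speps A$ is the open $\eps$-neighbourhood $B_\eps(\spec A)$) and then closes the loop through statement $(ii)$, using that $B_\eps(S_n)\Hto B_\eps(S)$ for all $\eps>0$ implies $S_n\Hto S$ (and, implicitly, the easy converse $\dH(B_\eps(S_n),B_\eps(S))\le\dH(\spec A_n,\spec A)$). You instead work at the level of the functions, identifying $f_n(\lambda)=\dist(\lambda,\spec A_n)$ and closing the loop through statement $(i)$, via the estimate $|\dist(\lambda,S)-\dist(\lambda,T)|\le\dH(S,T)$ in one direction and the $\liminf/\limsup$ characterisation plus \eqref{eq:Haus} in the other. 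The two routes are essentially dual to one another; yours is longer but fully self-contained, avoids any appeal to Lemma \ref{lem:nonconst}, and gives the small bonus that the convergence $f_n\to f$ is in fact uniform on all of $\C$ in the normal case. Your closing remark about the $\spec A_n$ not being uniformly bounded is a non-issue (Hausdorff convergence forces eventual uniform boundedness anyway), but it does no harm.
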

\begin{proof}
For normal operators, the $\eps$-pseudospectrum is exactly the $\eps$-neighborhood of the spectrum, e.g.~\cite{TrefEmb}. But $B_\eps(S_n)\Hto B_\eps(S)$ for all $\eps>0$ implies $S_n\Hto S$.
\end{proof}

\begin{remark}
{\bf a) }  The pointwise convergence $f_n\to f$ is uniform on compact subsets of $\C$. (Take an $\frac \eps 3$-net for the compact set and use the uniform Lipschitz continuity of the $f_n$.) 

{\bf b) }
It is well-known \cite{TrefEmb} that $\speps A\subseteq r\D$ with $r=\|A\|+\eps$. So if $(A_n)_{n\in\N}$ is a bounded sequence then $\speps B\subset r\D$ for all $B\in\{A,A_n:n\in\N\}$ with $r=\max\{\|A\|,\sup \|A_n\|\}+\eps$. By a), the convergence $f_n\to f$ is uniform on $\closn(r\D)$.
\end{remark}

\begin{remark}
Sometimes (especially in earlier works), pseudospectra are defined in terms of non-strict inequality:
\[
\textstyle
\Speps A\ :=\ \{\lambda\in\C: \|(A-\lambda I)^{-1}\|\ge\frac 1\eps \},\qquad \eps>0.
\]
One benefit is to get compact pseudospectra, in which case $\dH$ is a metric and $\Hto$ has a unique limit.
By Lemma \ref{lem:nonconst}, $\Speps A=\closn(\speps A)$ for all $\eps>0$. But since $S_n\Hto S $ if and only if $\closn(S_n)\Hto\closn(S)$, one could add, if one prefers, this third equivalent statement to Theorem~\ref{thm:main}:
\[
(iii)\quad\Speps A_n\ \Hto\ \Speps A,\qquad \forall\eps>0. \qedhere
\]
\end{remark}

\medskip

{\bf Acknowledgements.}
The authors thank Fabian Gabel and Riko Ukena from TU Hamburg for helpful comments and discussions.

\vfill
\noindent {\bf Authors' addresses:}\\
\\
Marko Lindner\hfill \href{mailto:lindner@tuhh.de}{\tt lindner@tuhh.de}\\
Dennis Schmeckpeper\hfill \href{mailto:dennis.schmeckpeper@tuhh.de}{\tt dennis.schmeckpeper@tuhh.de}\\[2mm]
Institut Mathematik\\
TU Hamburg (TUHH)\\
D--21073 Hamburg\\
GERMANY
\end{document}